\newcommand{\sequences}[0]{\ensuremath{2^{\mathbb{N}}}}
\newcommand{\words}[0]{\ensuremath{2^{<\mathbb{N}}}}
\newcommand{\N}[0]{\ensuremath{\mathbb{N}}}
\newcommand{\emptystr}{\epsilon}
\renewcommand{\P}{\mathbb{P}}
\newcommand{\Q}{\mathbb{Q}}
\newcommand{\concat}{\ensuremath{^\frown}}
\newtheorem{theorem}{Theorem}
\newtheorem{lemma}[theorem]{Lemma}
\theoremstyle{definition}
\newtheorem{definition}[theorem]{Definition}
\newtheorem{remark}[theorem]{Remark}
\begin{document}
	\title{Probabilistic vs deterministic gamblers}
	
	\author{Laurent Bienvenu}
	\address{LaBRI, CNRS \& Universit\'e de Bordeaux, France }
	\email{laurent.bienvenu@computability.fr}
	
	\author{Valentino Delle Rose}
	\address{Dipartimento di Ingegneria Informatica e Scienze Matematiche, Universit\`a Degli Studi di Siena, I-53100 Siena, Italy}
	\email{valentin.dellerose@student.unisi.it}
	
	\author{Tomasz Steifer}
	\address{Institute of Fundamental Technological Research, Polish Academy of Sciences, ul. Pawinskiego 5B, Warszawa, Poland}
	\email{tsteifer@ippt.pan.pl}
	\date{}
	
	\keywords{Algorithmic randomness, Martingales, Probabilistic computation, Almost everywhere domination.}
	
	\maketitle
	
	\begin{abstract}
		Can a probabilistic gambler get arbitrarily rich when all deterministic gamblers fail? We study this problem in the context of algorithmic randomness, introducing a new notion---almost everywhere computable randomness. A binary sequence $X$ is a.e.\ computably random if there is no probabilistic computable strategy which is total and succeeds on $X$ for positive measure of oracles. Using the fireworks technique we construct a sequence which is partial computably random but not a.e.\ computably random. We also prove the separation between a.e.\ computable randomness and partial computable randomness, which happens exactly in the uniformly almost everywhere dominating Turing degrees.
	\end{abstract}
	
	\section{Introduction}
	
	What does it mean for an infinite binary sequence~$X$ to be random? This may seem like a strange question at first since in classical probability theory, any infinite binary sequence drawn at random (with respect to the uniform distribution) has probability~$0$ to occur. Yet, the theory of algorithmic randomness gives us a way answer it from a computability perspective: $X$ is random if it does not possess any property of measure~$0$ which can be computably tested. There are many ways to formalize this, and hence many possible definitions of random sequence. One of the main approaches is the so-called unpredictability paradigm. We may say that a sequence~$X$ is unpredictable if no computable gambling strategy (or martingale) betting on the values of the bits of~$X$ and being rewarded fairly for its predictions can become arbitrarily rich during the course of the (infinite) game. The main two notions of randomness derived from this point of view are computable randomness and partial computable randomness, depending on whether we allow total computable or partial computable martingales. But in either case, the martingales considered are deterministic. 
	
	In this paper, we ask: do we get a stronger notion of randomness if we ask that~$X$ defeats not just all deterministically computable martingales, but also all probabilistically computable martingales? Usually, in computability theory, allowing probabilistic computations does not make a difference. This is in large part due to the foundational result that if a set $A \subset \N$ (or function $f: \N \rightarrow \N$, etc.) can be obtained by a probabilistic computation with positive probability, then it can in fact be obtained via a deterministic computation~\cite{de2016computability}. Yet this result is not necessarily an obstacle here as for a given~$X$, different runs of the probabilistic algorithm are allowed to produce different martingales, as long as with positive probability, the martingale output by the probabilistic algorithm defeats~$X$. And indeed, the main result of our paper is that probabilistic martingales \emph{do} in fact perform better than deterministic ones!

	We should note that probabilistic martingales were already considered by Buss and Minnes~\cite{buss2013probabilistic}. However, the applicability of their results for our purpose is limited. In particular, they studied two cases: probabilistic martingales which are total almost surely and probabilistic martingales which may be partial but nevertheless almost surely succeed on a given sequence. It is fairly easy to show that these cases reduce to computable and partial computable martingales respectively. The results of this paper are different and require more involved proofs.

	\subsection{Notation}
	The set of all infinite binary sequences is denoted by $\sequences$, while the set of finite binary strings is $\words$. The truncation of $x$ to the first $n$ bits is $x\restriction n$, while length of a string $\sigma$ is written by $|\sigma|$. We write $\tau\prec x$ when $\tau$ is a prefix of some $x$ (which might be a sequence or a string). The empty string is denoted by $\emptystr$, the concatenation of two strings $\sigma$ and $\tau$ by $\sigma \concat \tau$. 
	We are working with the product topology on $\sequences$, i.e., the topology generated by cylinder sets $[\sigma]=\{X\in\sequences:\sigma\prec X\}$. This means that open sets are of the form $\bigcup_{\sigma \in A} [\sigma]$ where $A$ is any set of strings. When $A$ is computably enumerable (c.e.), the set $\bigcup_{\sigma \in A} [\sigma]$ is called \emph{effectively open}. In this topology, the clopen sets are exactly the finite unions of cylinders. 
	
	We further equip $\sequences$ with the uniform measure~$\mu$, which is the measure where each bit of the sequence is equal to $1/2$ independently of the values of other bits. Formally, $\mu$ is the unique probability measure on the $\sigma$-algebra generated by cylinders for which $\mu([\sigma])=2^{-|\sigma|}$ for all~$\sigma$.
	
	As is common in computability theory, we sometimes identify sequences and strings with subsets of $\N$ (via characteristic function of the set) or paths in the full infinite binary tree. In particular, we say that $\sigma$ is on the left of $\tau$ if $\sigma$ is lesser than $\tau$ with respect to the lexicographical order.
	
	\subsection{Algorithmic randomness}
	
	Algorithmic randomness' goal is to assign a meaning to the notion of individual random string or sequence. While for strings we cannot reasonably hope for a clear separation between random and non-random (instead we have a quantitative measure of randomness: Kolmogorov complexity), for infinite binary sequences one can get such a separation. There are in fact many possible definitions. The most important one is called Martin-L\"of randomness and is defined as follows. A set $\mathcal{N} \subset \sequences$ is called effectively null if for every $n$ one can cover it by an effectively open set of measure at most~$\leq 2^{-n}$, uniformly in~$n$.  
	
	\begin{definition}
		A sequence $X \in \sequences$ is called \emph{Martin-L\"of random} if it does not belong to any effectively null set. 
	\end{definition}
	
	Said otherwise, $X$ is Martin-L\"of random if for every sequence $(\mathcal{U}_n)$ of uniformly effectively open sets such that $\mu(\mathcal{U}_n) \leq 2^{-n}$ for all~$n$ (such a sequence is known as a \emph{Martin-L\"of test}), we have $X \notin \bigcap_n \mathcal{U}_n$.\\
	
	An effectively null set corresponds to an atypical (= measure~$0$) property which can in some sense be effectively tested and therefore, a Martin-L\"of random sequence is one that withstands all computable statistical tests. The reason Martin-L\"of's definition of randomness is considered to be the central one is that it is both well-behaved (Martin-L\"of random sequences possess most properties one would expect from `random' sequences, including computability-theoretic properties) and robust, in that one can naturally get to the same notion by different approaches. For example, if we denote by $K$ the prefix-free Kolmogorov complexity function (see for example~\cite{Nies2009}), then the Levin-Schnorr theorem states that a sequence~$X$ is Martin-L\"of random if and only if $K(X \restriction n) \geq n - d$ for some~$d$ and all~$n$. Informally, this means that Martin-L\"of random sequences are exactly the `incompressible' ones. 
	
	As discussed above there is, however, another natural paradigm to define randomness (seemingly different from atypicality): unpredictability. We want to say that a sequence~$X$ is random if its bits cannot be guessed with better-than-average accuracy. This is formalized via the notion of martingale. 
	
	\begin{definition}
		A function $d:\words\rightarrow \mathbb{R}^{> 0}$ is called a \emph{martingale} if for all $\sigma\in\words$:
		\[d(\sigma)=\frac{d(\sigma 0)+d(\sigma 1)}{2} \]
		A martingale $d$ succeeds on a sequence $X$ if
		\[\limsup_{n\to\infty} d(X\restriction n)=\infty .\]
	\end{definition}
	
	A martingale represents the outcome of a gambling strategy in a fair game where the gambler guesses bits one by one by betting some amount of money at each stage, doubling the stake if correct, losing the stake otherwise, debts not being allowed. The quantity $d(\sigma)$ represents the capital of the gambler after having seen $\sigma$. Usually in the literature martingales are allowed to take value~$0$ but not allowing it makes no difference for the definitions that follow and avoids some pathological cases later in the paper. 
	
	Armed with the notion of martingale, we can now formulate an important definition of ``randomness'', known as computable randomness. 
	
	\begin{definition}
		A sequence $X\in\sequences$ is called \emph{computably random} if no computable martingale succeeds on $X$.
	\end{definition}
	
	In the above definition, we consider only martingales that are total computable. We would also like to allow partial computable martingales, but since they are not total functions in general, they are not even martingales in the above sense. To remedy this, one can simply define a partial martingale as a function $d$ taking values in  $\mathbb{R}^{> 0}$ whose domain is contained in $\words$ and closed under the prefix relation (if $d(\sigma)$ is defined, $d(\tau)$ is defined for every prefix~$\tau$ of $\sigma$) and furthermore for every~$\sigma$, $d(\sigma0)$ is defined if and only if $d(\sigma1)$ is defined and in case both are defined, the fairness condition $d(\sigma)=(d(\sigma 0)+d(\sigma 1))/2$ applies. Finally, success is defined in the same way as for martingales: we say that $d$ succeeds on~$X$ if $d(X \restriction n)$ is defined for all~$n$ and $\limsup_{n\to\infty} d(X\restriction n)=\infty$. We can now get the following strengthening of computable randomness.  
	
	\begin{definition}
		A sequence $X\in\sequences$ is called \emph{partial computably random} if no partial computable martingale succeeds on $X$.
	\end{definition}
	
	It is well-known that partial computable randomness is strictly stronger than computable randomness, but nonetheless strictly weaker than Martin-L\"of randomness (see~\cite{Nies2009}). \\
	
	Computable randomness and partial computable randomness are pretty robust notions. For example, it makes no difference whether we define success as achieving unbounded capital or as having a capital that tends to infinity. 
	
	\begin{lemma}[folklore, see~\cite{downey_algorithmic_2010}]\label{savinglemma}
		For every total (resp.\ partial) computable martingale $d$ there exists a (resp.\ partial) computable martingale $d'$ such that $d$ and $d'$ succeed on exactly the same sequences and for every $A\in\sequences$ we have $\limsup_{n\to\infty}d(A\restriction n)=\infty$ iff $\lim_{n\to\infty}d'(A\restriction n)=\infty$. Moreover, an index for $d'$ can be found effectively from an index for~$d$. 
	\end{lemma}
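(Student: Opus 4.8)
The plan is to use the classical \emph{savings trick}: I would transform $d$ into a martingale $d'$ that can never again risk money it has already secured, so that once its capital becomes large it stays large. First I would rescale, replacing $d$ by $d/d(\emptystr)$ (which affects neither the martingale property nor the set of sequences on which it succeeds, and whose index is computable from that of $d$) and so assume $d(\emptystr)=1$. I would then define $d'$ by splitting its capital at every string $\sigma$ into an \emph{active} part $a(\sigma)$ and a \emph{frozen} part $F(\sigma)$, with $d'(\sigma)=a(\sigma)+F(\sigma)$, where $a(\emptystr)=1$ and $F(\emptystr)=0$. The active part is made to bet in the same proportions as $d$: along a segment of a path with no freezing I set $a(\sigma\concat i)=a(\sigma)\cdot d(\sigma\concat i)/d(\sigma)$ and leave $F$ unchanged. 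The freezing rule is: whenever $a$ reaches or exceeds the threshold $2$, I reset $a$ to $1$ and move the surplus $a-1$ (which is $\ge 1$) into the frozen account, which is then locked forever.

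The key observation that makes this work is that freezing is \emph{capital-neutral} at the node where it occurs: it only reshuffles money between $a$ and $F$ without changing $d'(\sigma)$. Hence at every splitting node the fairness equation for $d'$ is inherited from that of $d$, since $F$ is constant across a split and $a$ is there a positive scalar multiple of $d$, giving $d'(\sigma\concat0)+d'(\sigma\concat1)=2F(\sigma)+2a(\sigma)=2d'(\sigma)$. Along any path $A$ I would track the successive freezing nodes $\rho_0\preceq\rho_1\preceq\cdots$ (with $\rho_0=\emptystr$). Because a freeze at $\rho_{k+1}$ happens exactly when the active capital, reset to $1$ at $\rho_k$, has reached $2$, one gets $d(\rho_{k+1})\ge 2\,d(\rho_k)$ and hence $d(\rho_k)\ge 2^{k}$, while each freeze adds at least $1$ to $F$. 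Conversely, if only finitely many freezes occurred then $a(\sigma)=d(\sigma)/d(\rho_k)<2$ for all $\sigma$ beyond the last freeze node $\rho_k$, so $d$ would be bounded along $A$. Thus infinitely many freezes occur along $A$ if and only if $\limsup_n d(A\restriction n)=\infty$.

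From this I would read off every required equivalence. If $\limsup_n d(A\restriction n)=\infty$ there are infinitely many freezes, so $F(A\restriction n)\to\infty$; since $d'\ge F$ and $F$ is non-decreasing, $\lim_n d'(A\restriction n)=\infty$. Conversely the active capital always stays below $2$, so $d'(A\restriction n)<2+F(A\restriction n)$, whence $\limsup_n d'(A\restriction n)=\infty$ forces $F(A\restriction n)\to\infty$, infinitely many freezes, and $\limsup_n d(A\restriction n)=\infty$. This simultaneously yields that $d$ and $d'$ succeed on the same sequences and the promised upgrade from $\limsup$ to $\lim$. The construction is manifestly local, as $d'(\sigma)$ depends only on the values of $d$ on prefixes of $\sigma$, so it transfers verbatim to the partial case with $d'(\sigma)$ defined exactly when $d$ is defined on all prefixes of $\sigma$. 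The one point needing care, and the main technical obstacle, is effectivity of the freezing decision: testing ``$a(\sigma)\ge 2$'' is a comparison of reals and need not be decidable. I would handle this in the standard way, either by first passing to a rational-valued computable martingale succeeding on the same sequences, or by triggering a freeze on a rational approximation crossing the threshold with a fixed safety margin; either fix preserves a geometric lower bound $d(\rho_k)\ge c^{k}$ for some constant $c>1$, which is all the argument uses, and makes an index for $d'$ computable from an index for $d$.
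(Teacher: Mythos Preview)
The paper does not actually prove this lemma; it is stated as folklore with a reference to Downey--Hirschfeldt, so there is no in-paper argument to compare against. Your proposal is exactly the standard savings-trick proof one finds in that reference: split the capital into an active part that mimics $d$ multiplicatively and a frozen part that is never wagered again, freezing whenever the active part crosses a fixed threshold. Your verification that $d'$ is a martingale (freezing is capital-neutral at the node where it occurs, and between freezes $a$ is a positive multiple of $d$), that infinitely many freezes along $A$ is equivalent to $\limsup_n d(A\restriction n)=\infty$, and that $F(A\restriction n)\le d'(A\restriction n)<2+F(A\restriction n)$ forces the $\limsup$/$\lim$ upgrade, is correct. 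Your handling of the partial case and of the decidability of the threshold test (pass to a rational-valued martingale, as the paper's Exact Computation Lemma permits, or freeze on a rational approximation with margin) is also the standard fix and preserves effectivity of the index map.
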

	
	Another important fact is that instead of considering computable real-valued martingales, we can restrict ourselves to rational valued martingales that are computable as functions from $\words$ to $\mathbb{Q}$ (which we sometimes refer to as \emph{exactly computable} martingales).

	\begin{lemma}[Exact Computation lemma, see~\cite{mayordomo1994contributions}]
		For every total (resp.\ partial) computable martingale $d$, there exists a total (resp.\ partial) exactly computable martingale $d'$ such that $d'$ succeeds on every sequence on which $d$ succeeds. Moreover, an index for $d'$ can be effectively obtained from an index for $d$. 
	\end{lemma}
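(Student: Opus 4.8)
The plan is to build $d'$ by a node-by-node recursion down the tree $\words$, preserving the fairness condition \emph{exactly} at every step while keeping $d'$ rational-valued, and then to argue that $d'$ never falls more than a fixed multiplicative constant below $d$ along any path. Since $d$ is a computable real-valued martingale, there is a computable $g:\words\times\N\to\Q$ with $|g(\sigma,k)-d(\sigma)|\le 2^{-k}$, so every value of $d$ and every ratio of such values can be approximated to prescribed precision uniformly in $\sigma$. I set $d'(\emptystr)$ to be any positive rational. Given a rational $d'(\sigma)>0$, I describe the move of $d$ at $\sigma$ by the betting fraction $q_\sigma=d(\sigma 0)/(2d(\sigma))$, so that $d(\sigma 0)=2d(\sigma)q_\sigma$ and $d(\sigma 1)=2d(\sigma)(1-q_\sigma)$; here the strict positivity of martingale values (the paper's convention) guarantees $q_\sigma\in(0,1)$ with both $q_\sigma$ and $1-q_\sigma$ strictly positive. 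I then pick a rational $q'_\sigma$ close to $q_\sigma$ and define $d'(\sigma 0)=2d'(\sigma)q'_\sigma$ and $d'(\sigma 1)=2d'(\sigma)(1-q'_\sigma)$. Both children are rational and, for $q'_\sigma\in(0,1)$, positive, and the fairness condition $d'(\sigma)=(d'(\sigma 0)+d'(\sigma 1))/2$ holds identically regardless of the rounding. This is the crucial structural point: I round only one quantity and obtain the exact martingale equality for free.

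The key object to control is the ratio $\rho_\sigma=d'(\sigma)/d(\sigma)$. A one-line computation gives $\rho_{\sigma 0}=\rho_\sigma\cdot(q'_\sigma/q_\sigma)$ and $\rho_{\sigma 1}=\rho_\sigma\cdot\big((1-q'_\sigma)/(1-q_\sigma)\big)$, so along a path $X$ the ratio $\rho_{X\restriction n}$ equals $\rho_\emptystr$ times a product of factors of the form $q'/q$ and $(1-q')/(1-q)$. I will ensure each such factor lies in $[\,1-\epsilon_\sigma,\,1+\epsilon_\sigma\,]$ with $\epsilon_\sigma=2^{-|\sigma|-2}$; concretely this amounts to requiring $|q'_\sigma-q_\sigma|\le\epsilon_\sigma\min(q_\sigma,1-q_\sigma)$, which is achievable because the rationals are dense and, $q_\sigma$ and $1-q_\sigma$ being strictly positive, one can compute a positive rational lower bound on $\min(q_\sigma,1-q_\sigma)$ from $g$ and then approximate $q_\sigma$ to sufficient precision. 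Since exactly one node of each length lies on $X$, we have $\sum_n\epsilon_{X\restriction n}\le\sum_n 2^{-n-2}<\infty$, so $\prod_n(1-\epsilon_{X\restriction n})$ converges to a constant $c>0$, whence $\rho_{X\restriction n}\ge\rho_\emptystr\,c>0$ for all $n$. Therefore $d'(X\restriction n)\ge(\rho_\emptystr c)\,d(X\restriction n)$, and $d'$ succeeds on $X$ whenever $d$ does.

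Two points finish the argument. For computability, every step is effective given an index for $d$: the lower bounds and the approximation of $q_\sigma$ are computed from $g$, while each $d'(\sigma)$ is a rational produced exactly by the recursion, and the uniformity of the construction yields an index for $d'$ from one for $d$. For the partial case I run the same recursion but declare the two children of $\sigma$ to be defined only once both $d(\sigma 0)$ and $d(\sigma 1)$ have converged (both are needed to bound $\min(q_\sigma,1-q_\sigma)$ from below); since $\operatorname{dom}(d)$ is prefix-closed and satisfies the both-children-or-neither condition, this yields $\operatorname{dom}(d')=\operatorname{dom}(d)$, so $d'$ is a partial exactly computable martingale that is defined all along any path on which $d$ is, and the same ratio estimate applies.

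The main obstacle is precisely the tension between exact rationality and aggressive betting: when $q_\sigma$ (or $1-q_\sigma$) is tiny, $d$ stakes almost all of its capital, and a rounding of $q_\sigma$ of fixed additive accuracy would destroy the multiplicative tracking of $\rho$ on the heavily-bet branch. The resolution is to let the rounding precision scale with the computable lower bound on $\min(q_\sigma,1-q_\sigma)$ while charging the resulting relative error against the summable budget $\epsilon_\sigma=2^{-|\sigma|-2}$, so that the cumulative distortion of $\rho$ along every path stays bounded below by a positive constant.
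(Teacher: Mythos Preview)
The paper does not supply its own proof of this lemma; it merely cites Mayordomo's thesis, so there is nothing in the paper to compare your argument against. Your proof is correct and is essentially the standard one: round the betting fraction $q_\sigma=d(\sigma0)/(2d(\sigma))$ to a rational $q'_\sigma$ with \emph{multiplicative} error bounded by a summable sequence, so that the fairness condition is preserved exactly while the ratio $d'(\sigma)/d(\sigma)$ remains bounded below by a fixed positive constant along every path. You correctly exploit the paper's convention that martingales take strictly positive values, which is exactly what makes a positive rational lower bound on $\min(q_\sigma,1-q_\sigma)$ computable and the construction effective; and your treatment of the partial case (defining both children of $\sigma$ only when $d(\sigma0)$ and $d(\sigma1)$ both converge, so that $\operatorname{dom}(d')=\operatorname{dom}(d)$) is the right move. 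One cosmetic point for the uniformity claim: fix $d'(\emptystr)$ once and for all (e.g.\ $d'(\emptystr)=1$) rather than leaving it as ``any positive rational''.
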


	\subsection{Probabilistic martingales}
	
	The above definitions assume computable martingales (partial or total) are deterministic. Our goal is to understand whether probabilistic martingales (i.e., obtained by a probabilistic algorithm) can do better. Usually, to capture the idea of probabilistic algorithm, one appeals to probabilistic models of computation, such as probabilistic Turing machines. However, from a computability-theoretic perspective, where relativization to an oracle is a bread-and-butter object of study, it is equivalent to assume that an infinite sequence of random bits is drawn in advance and given as oracle to a deterministic Turing machine which then uses it as a source of randomness. Thus, we will consider \emph{partial computable oracle martingales}, that is, Turing functionals $d$ where for every oracle~$Y$, $d^Y$ (the function computed by the functional with~$Y$ given as oracle) is a partial martingale.  
	
	\begin{definition}
		A sequence $X\in\sequences$ is called \emph{a.e.~computably random} if for every partial computable oracle martingale $d$ the set of oracles $Y$ such that $d^Y$ is a total martingale and succeeds on $X$ has measure zero, i.e. \[\mu\left(\left\{Y\in\sequences: d^Y \ \text{is total and} \ \limsup_{n\to\infty} d^Y(X\restriction n)=\infty\right\}\right)=0.\] 
		$X$ is said to be \emph{a.e.~partial computably random} if for every partial computable oracle martingale $d$ the set of oracles $Y$ such that $d^Y$ succeeds on $X$ has measure zero. 
	\end{definition}
	
	Note that we could have equivalently defined a.e.~(partial) computably randomness directly from the relativization of (partial) computable randomness: a sequence~$X$ is a.e.~(partial) computably random if for almost every~$Y$, $X$ is (partial) computably random relative to~$Y$. \\
	
	The informal question `do probabilistic gamblers perform better than deterministic ones' can now be fully formalized by the following two questions: 
	
	\begin{itemize}
		\item Is a.e.~computable randomness equal to computable randomness?
		\item Is a.e.~partial computable randomness equal to partial computable randomness?
	\end{itemize}
	
	\label{bussminnes} In~\cite{buss2013probabilistic}, Buss and Minnes studied a restricted version of this problem. They considered a model of probabilistic martingales where one further requires $d^Y(\sigma)$ to be defined for all $\sigma$ and almost all~$Y$. This is a strong restriction which allows one to use an averaging technique. If $d$ is a probabilistic martingale with this property, it is easy to prove that the average $D$ defined by $D(\sigma) = \int_Y d^Y(\sigma)$ is a computable martingale. If $X$ is computably random, $D$ fails against~$X$, that is, there is a constant~$c$ such that $D(X \restriction n) < c$ for all~$n$. Moreover, by Fatou's lemma:
	\[
	\int_Y \liminf_n d^Y(X \restriction n) \leq \liminf_n D(X \restriction n) < c
	\]
	which in turn implies that the set $\{Y:  \liminf_n d^Y(X \restriction n) = \infty\}$ has measure~$0$. In other words, the set of $Y$ such that $d^Y$ strongly succeeds against~$X$ has measure~$0$. By Lemma~\ref{savinglemma}, this means that if a sequence $X$ is computably random if and only if for every probabilistic martingale with the Buss-Minnes condition, $d$ fails on~$X$ with probability~$1$. \\
	
	Our main result is that, in the general case, we no longer have an equivalence of the two models: probabilistic martingales are indeed stronger than deterministic ones. 
	
	\begin{theorem}\label{thm:main-result}
		There exist a sequence~$X$ which is partial computably random but not a.e.\ partial computably random and indeed not even a.e.\ computably random.  
	\end{theorem}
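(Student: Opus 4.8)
The plan is to build $X$ by a fireworks construction while, in parallel, fixing once and for all a single probabilistic martingale $M$ that does the exploiting. Since I must defeat every partial computable martingale, I first invoke the Savings Lemma (Lemma~\ref{savinglemma}) and the Exact Computation lemma to replace the class of partial computable martingales by a uniformly enumerated sequence $(d_e)_{e\in\N}$ of partial, rational-valued martingales having the savings property (once $d_e$ exceeds a value it never drops more than a fixed constant below it). With this normalization, ``$d_e$ does not succeed on $X$'' becomes the clean requirement $R_e$ that $\sup_n d_e(X\restriction n)$ is finite, so partial computable randomness of $X$ reduces to meeting all the $R_e$. The target on the other side is a single functional $M$ with $\mu\{Y:\ M^Y \text{ total and } \limsup_n M^Y(X\restriction n)=\infty\}>0$; since failing a.e.\ computable randomness is the stronger of the two negative conclusions (the relevant set of oracles is the smaller one), exhibiting such an $M$ shows at once that $X$ is neither a.e.\ computably random nor a.e.\ partial computably random.

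I lay $X$ out in consecutive blocks $B_0,B_1,\dots$, and $M$ is designed to bet only inside these blocks, committing a small fixed fraction of its current capital and otherwise keeping its stake. The governing observation is an asymmetry: a fixed deterministic strategy (a single $d_e$, equivalently a single oracle $Y$) can be foiled by the construction, because at each block I am free to set the bits of $X$ on $B_n$ against the finitely many $d_e$ currently threatening to profit, exactly as in the classical construction of a partial computably random sequence; but a single fixed construction of $X$ cannot foil a positive-measure set of oracles at once. Crucially, the random oracle $Y$ is \emph{not} used as the guess itself---a uniformly random guess carries no advantage against an adversarially chosen bit, matching it with probability exactly $1/2$---but as a source of timing and genericity that lets $M^Y$ wait long enough to read off the value the construction is forced to commit to. This is precisely the gap between partial computable prediction, which must either commit in real time or risk being undefined, and prediction relative to an oracle that can outrun the relevant computable time bounds.

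Concretely, I would attach to each block a c.e.\ ``reveal'' event whose occurrence or non-occurrence pins down the exploitable content of $B_n$, and diagonalize so that the active $d_e$ cannot profit whichever way the event resolves; a partial computable martingale that tries to wait for the event risks being undefined on $X$ and hence failing to succeed, which is how the $R_e$ survive. The fireworks machinery is what organizes $M$'s response: it equips $M^Y$ with a randomized stopping rule that, on each block, fires and commits to the correct value with probability bounded away from $0$, and it converts the qualitative statement ``$M^Y$ catches the reveal for infinitely many blocks'' into a genuine lower bound on the measure of winning oracles. Choosing the block lengths and the betting fraction so that infinitely many correct commitments force $\limsup_n M^Y(X\restriction n)=\infty$ while the total staked on inconclusive or incorrect blocks stays bounded then yields success for $M^Y$ on a positive-measure set of $Y$, on which $M^Y$ is also total by design.

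The hard part, and where the fireworks technique earns its keep, is the simultaneous management of the two opposing demands inside a single effective construction: the $R_e$ are met by an adversarial, diagonalizing choice of the bits of $X$, yet I must prove that \emph{no matter how} these choices are made the randomized strategy $M$ still wins with probability bounded below by a fixed positive constant. Extracting a clean, honest lower bound on this probability---tracking how the per-block success probabilities combine with the stake-sizing and with the at-most-finitely-many injuries that each $R_e$ inflicts on earlier blocks---is the crux of the argument. Once that bound is in hand, the remainder is routine: checking that $M$ is a genuine partial computable oracle martingale, that $M^Y$ is total on the winning set, and that the normalization of the $d_e$ costs no generality.
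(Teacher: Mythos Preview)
Your proposal has the right intuition---randomized timing should let a probabilistic gambler outrun any fixed computable commitment schedule---but it remains a sketch and leaves the central mechanism undefined. You never say what the ``reveal event'' attached to a block actually is, nor how $M^Y$ ``fires and commits to the correct value with probability bounded away from~$0$''; you acknowledge that extracting this lower bound is the crux, and then do not extract it. There is also a structural confusion: you say you build $X$ by a fireworks construction, but the fireworks argument you gesture at is really on the martingale side (randomized stopping), while $X$ is built by an ordinary finite-injury diagonalization. As written, the proposal does not supply the one non-routine step it needs.

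The paper's proof closes exactly this gap, and it is worth seeing how, because the mechanism is cleaner than block-by-block reveals. The partial computably random $X$ is produced by the standard construction $\Delta((s_e)_e)$ that introduces the $d_e$ one at a time; the only freedom is the delay $s_e$ before the next martingale enters, and the key observation (``Fact~1'') is that between introductions $X$ coincides with a \emph{computable} sequence of index~$e$ on a prefix of length $\geq s_e$. The fireworks technique is then applied not to $X$ but to the partial order $\P$ of finite rational martingale approximations, yielding for a positive-measure set of oracles a $\P$-generic total martingale $D^Y$. Genericity gives, for every computable $Z$ and every $k$, a bound $s$ such that $D^Y$ reaches capital $k$ on $Z\restriction s$; choosing the delays $s_e$ large enough (and trimming the oracle set by a Borel--Cantelli argument) forces $D^Y$ to reach capital $e$ on the computable segment that $X$ follows during phase~$e$, hence $D^Y$ succeeds on $X$. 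In short: the exploited weakness of $X$ is not a hidden bit per block, but the fact that any diagonalizing construction must follow computable paths for arbitrarily long stretches, and a generic martingale beats every computable path. This replaces your unspecified per-block success probability by a concrete density argument in~$\P$.
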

	
	%  We do not know whether this also holds for partial computable randomness (see Question~\ref{question:pcr} at the end of the paper).  
	
	We will devote the next sections to proving Theorem~\ref{thm:main-result}, but let us say a few words on why we believe it to be an interesting result. First of all, it is in stark contrast with Buss and Minnes' result that probabilistic martingales do not do any better than deterministic ones when they are required to be total with probability~$1$: in the general case, probabilistic martingales do better! Second, this is to our knowledge the first result of this kind in algorithmic randomness. If we were to define a.e.~Martin-L\"of randomness following the same idea (i.e., saying that $X$ is a.e.~Martin-L\"of random if for almost all~$Y$, $X$ is Martin-L\"of random relative to oracle $Y$), we would not get anything new, because a.e.~Martin-L\"of randomness coincides with Martin-L\"of randomness. This is a direct consequence of the famous van Lambalgen theorem~\cite{vanLambalgen1987}, which states that for every $A,B \in \sequences$, the join $A \oplus B = A(0)B(0)A(1)B(1) \ldots$ is Martin-L\"of random if and only if $A$ is Martin-L\"of random and $B$ is Martin-L\"of relative to $A$, if and only if $B$ is Martin-L\"of random and $A$ is Martin-L\"of random relative to~$B$. Now, let $X$ be Martin-L\"of random. For almost all~$Y$, $Y$ is Martin-L\"of random relative to~$X$ (this is simply the fact that the set of Martin-L\"of random sequences has measure~$1$, relativized to~$X$), thus $X \oplus Y$ is Martin-L\"of random, and thus $X$ is Martin-L\"of random relative to~$Y$. This shows that $X$ is a.e.~Martin-L\"of random. We see that van Lambalgen's theorem is key in this argument (we use it three times!). It was already known that the analogue of van Lambalgen for computable randomness fails~\cite{Yu2007}, but Theorem~\ref{thm:main-result} shows that it fails in a very strong sense. 
	
	Let us also remark that van Lambalgen's theorem shows that Martin-L\"of randomness implies a.e.\ (partial) computable randomness: if $X$ is Martin-L\"of random, it is also Martin-L\"of random relative to~$Y$ for almost every~$Y$, and thus also (partial) computably random relative to~$Y$ for almost every~$Y$. 
	
	\section{Turing degrees of a.e.~computably random sequences}
	Before moving to the proof of Theorem \ref{thm:main-result}, we give a simple degree-theoretic proof of a weaker result, namely a separation between computable randomness and a.e.~computable randomness. 
	
	Recall that every Martin-L\"of random sequence is computably random but a computable random sequence is not necessarily Martin-L\"of random.This separation has some interesting connections with classical computability theory, as witnessed by the following theorem (recall that a sequence $Y$ has \emph{high Turing degree}, or simply \emph{is high} if it computes some function $F: \N \rightarrow \N$ such that for every total computable function~$f$, $f(n) \leq F(n)$ for almost all~$n$). 
	
	\begin{theorem}[Nies, Stephan, Terwijn~\cite{NiesST2005}]\label{thm:nst}
		Let $Y \in \sequences$. If $Y$ computes a sequence~$X$ such that $X$ is computably random but not Martin-L\"of random, then $Y$ has high Turing degree. Conversely, if $Y$ has high Turing degree, then it computes some $X$ which is computably random but not Martin-L\"of random. 
	\end{theorem}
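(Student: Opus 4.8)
The statement has two directions, and I treat them separately. The key simplification is that, by the definition recalled just above, $Y$ is high exactly when it computes a function dominating every total computable function; so for the forward direction it suffices to produce such a function computable from $X$, and hence from $Y \ge_T X$.

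\textbf{Forward direction.} Assume $X \le_T Y$ is computably random but not Martin-L\"of random, and fix a Martin-L\"of test $(U_n)$ with $X \in \bigcap_n U_n$; write $U_{n,s}$ for its enumeration after $s$ stages, a finite (hence clopen) union of cylinders uniformly computable in $n,s$. Define $g(n)$ to be the least stage $s$ with $X \in U_{n,s}$. Since $X \in U_n$ for every $n$, and $X$ can test membership in the clopen set $U_{n,s}$ by reading finitely many bits, $g$ is total and $g \le_T X$. I claim $g$ dominates every total computable function, which finishes this direction. If not, some total computable $h$ satisfies $h(n) > g(n)$ for infinitely many $n$. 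Put $V_n = U_{n,h(n)}$: these are uniformly computable clopen sets with $\mu(V_n) \le \mu(U_n) \le 2^{-n}$, and for each of the infinitely many $n$ with $h(n) > g(n)$ we have $X \in U_{n,g(n)} \subseteq V_n$. Now the weighted sum of conditional measures $d(\tau) = 2^{-1} + \sum_n \mu(V_n \mid [\tau])$ is a strictly positive martingale; it is computable because each term is computed exactly from the clopen $V_n$ and the tails are bounded by $\sum_{n > N} 2^{-n}$. Whenever $X \in V_n$, some prefix of $X$ already lies in the finite union $V_n$, so $\mu(V_n \mid [X \restriction m]) = 1$ for all large $m$; as this happens for infinitely many $n$, we get $d(X \restriction m) \to \infty$. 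Thus a computable martingale succeeds on $X$, contradicting computable randomness. Hence $g$ is dominating and $Y$ is high.

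\textbf{Converse direction.} Suppose now $Y$ is high and fix $g \le_T Y$ dominating every total computable function. I will build $X \le_T Y$ meeting two kinds of requirements at once. For computable randomness, using the Exact Computation and Saving lemmas I may enumerate the partial computable martingales $(d_e)$ as rational, exactly computable and saved, so that success means the capital tends to infinity. At stage $n$ I call $d_e$ \emph{active} if $e \le n$ and $d_e$ has converged on all strings of length $\le n+1$ within $g(n)$ steps; because $g$ dominates the (computable, when $d_e$ is total) convergence-time function, every total $d_e$ is active from some point on, while non-total ones may cease to be active. Away from the forced blocks described next, I choose the bits of $X$ so as not to increase a running weighted combination of the currently active martingales, incorporating each newly active $d_e$ with a weight making its activation jump at most $2^{-e}$; in isolation this keeps the combination bounded and hence keeps every total $d_e$ bounded along $X$. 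For non-Martin-L\"of-randomness I fix the computable families $S_k = \{\sigma : |\sigma| = 2^k,\ \sigma \text{ ends in } 0^k\}$, whose cylinder unions have measure $2^{-k}$, so that $\mathcal{W}_n = \bigcup_{k \ge n} \bigcup_{\sigma \in S_k} [\sigma]$ is, after a harmless reindexing, a Martin-L\"of test. At an infinite, adaptively chosen set of slots $k$ I force the block $X(2^k - k) \cdots X(2^k - 1) = 0^k$, which puts $X \restriction 2^k$ into $S_k$; doing this for infinitely many $k$ places $X$ in every $\mathcal{W}_n$ and so makes $X$ fail this test, witnessing that $X$ is not Martin-L\"of random.

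\textbf{Main obstacle.} The difficulty is reconciling the two requirements: the regularities that make $X$ fail the Martin-L\"of test must be invisible to every computable martingale, for otherwise some active $d_e$ would bet on the forced zero-blocks and thereby succeed, destroying computable randomness. This is what forces the choice of slots to be both non-computable and adaptive. Roughly, a slot $k$ is used only once the active martingales are not positioned to profit from the block, and any slot at which a given $d_e$ would profit is charged against a finite budget for $d_e$, so that each total $d_e$ gains on only finitely many blocks while infinitely many blocks are nonetheless placed. Verifying that such safe slots always eventually appear, and carrying the bookkeeping out uniformly over all partial computable martingales (including the need to time out their computations), is precisely where the dominant function $g$ is indispensable. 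This is the technical heart of the construction and the exact point at which highness, shown necessary by the forward direction, is actually used.
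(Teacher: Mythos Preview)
The paper does not actually prove this theorem: it is quoted from Nies--Stephan--Terwijn and used as a black box. What the paper \emph{does} prove is the analogous statement with highness replaced by almost-everywhere domination (Theorem~\ref{thm:degree-sep}), and that proof is the natural benchmark for your attempt.

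Your forward direction is correct and is exactly the argument the paper uses for its analogue: define the entry-time function $g(n)=\min\{s:X\in U_{n,s}\}\le_T X$, and observe that if some computable $h$ escapes $g$ infinitely often then the clopen sets $V_n=U_{n,h(n)}$ witness failure of (Schnorr, hence) computable randomness. You package the last step as an explicit computable martingale $\sum_n \mu(V_n\mid[\tau])$ rather than invoking Schnorr randomness, but this is the same content.

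For the converse you take a genuinely different route from the paper's analogue, and you leave it incomplete. Your plan is to diagonalize against a weighted sum of $g$-time-bounded martingales while simultaneously forcing zero-blocks at positions $2^k$ to fail a fixed Martin-L\"of test; you correctly identify the obstacle (the forced blocks must be invisible to all active martingales) and then stop, calling it ``the technical heart of the construction'' without carrying it out. By contrast, the paper's proof of the analogue sidesteps this obstacle entirely. It first replaces the dominating function by one with $K(F(n))=O(\log n)$ (Lemma~\ref{lem:simple-aed}), then diagonalizes against the $F$-time-bounded sum; the resulting $X$ automatically satisfies $K(X\restriction n)=O(\log n)$, so it is not Martin-L\"of random (indeed not even partial computably random) with no need to insert regularities by hand. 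The same trick works verbatim in the high case: given a dominating $g\le_T Y$, let $b(n)=|\{e\le n:d_e\text{ converges on }2^{\le n}\text{ in }\le g(n)\text{ steps}\}|$ and set $g'(n)$ to be the least $s$ with the same count; then $g'\le_T g$, $g'$ still time-bounds every total computable martingale, and $K(g'(n))=O(\log n)$ since $g'(n)$ is determined by $(n,b(n))$ with $b(n)\le n+1$. Diagonalizing against $\sum_e 2^{-e}d_e^{g'}$ then gives the desired $X$ immediately. This is both simpler and stronger than your sketch, and it closes the gap you left open.
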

	
	It turns out that one can get an exact analogue of this theorem for a.e.\ computable randomness by replacing highness with a stronger notion: almost everywhere domination. A sequence $Y$ is said to have \emph{almost everywhere  dominating Turing degree, or a.e.\ dominating Turing degree} if it computes an almost everywhere dominating function $F$, that is, a function~$F$ such that for every Turing functional~$\Gamma$ and almost every~$Z$, if $\Gamma^Z$ is total, then $\Gamma^Z(n) \leq F(n)$ for almost all~$n$. See~\cite{Nies2009} for a more complete presentation of the history of this notion, originally due to Dobrinen and Simpson~\cite{DobrinenS2004}.
	
	\begin{theorem}\label{thm:degree-sep}
		Let $Y \in \sequences$. If $Y$ computes a sequence~$X$ such that $X$ is a.e.\ computably random but not Martin-L\"of random, then $Y$ has a.e.\ dominating Turing degree. Conversely, if $Y$ has a.e.\ dominating Turing degree, then it computes some $X$ which is a.e.\ computably random but not Martin-L\"of random. 
	\end{theorem}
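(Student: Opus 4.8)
The plan is to prove Theorem~\ref{thm:degree-sep} by mimicking the structure of the Nies--Stephan--Terwijn result (Theorem~\ref{thm:nst}), replacing the notion of highness with almost everywhere domination at each step, and relying on the characterization of a.e.\ computable randomness via relativized computable randomness (for almost every~$Y$, $X$ is computably random relative to~$Y$).

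\emph{Forward direction.} Suppose $Y$ computes $X$, where $X$ is a.e.\ computably random but not Martin-L\"of random. Since $X$ is not Martin-L\"of random, $X$ lies in the intersection of some Martin-L\"of test $(\mathcal{U}_n)$. The plan is to extract from this test an a.e.\ dominating function computable from~$Y$. The key observation should be that a.e.\ computable randomness of~$X$ means that for almost every oracle~$Z$, no $Z$-computable martingale succeeds on~$X$; combining this with the fact that $X$ passes no Martin-L\"of test will force a strong domination property. Concretely, I expect to argue as in the highness proof: from membership of $X$ in a Martin-L\"of test one builds, for each Turing functional $\Gamma$, a martingale strategy (relative to an oracle~$Z$) that would succeed on~$X$ unless some $Y$-computable function dominates $\Gamma^Z$ on almost all inputs. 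Because failure of domination on a positive-measure set of oracles~$Z$ would yield success of the corresponding probabilistic martingale on a positive-measure set, contradicting a.e.\ computable randomness, the function read off from~$X$ (hence from~$Y$) must dominate $\Gamma^Z$ for almost every~$Z$. This yields an a.e.\ dominating function computable from~$Y$, so $Y$ has a.e.\ dominating degree.

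\emph{Converse direction.} Suppose $Y$ has a.e.\ dominating Turing degree, so $Y$ computes an a.e.\ dominating function~$F$. The plan is to use~$F$ to construct a sequence~$X \leq_T Y$ that is a.e.\ computably random but not Martin-L\"of random. Following the NST strategy, I would build~$X$ so that it defeats, relative to almost every oracle~$Z$, every $Z$-computable martingale: the a.e.\ domination property of~$F$ lets us uniformly bound, for almost every~$Z$, the running times or the speed at which any total $Z$-computable martingale can grow, so that a diagonalization against all such martingales becomes effective relative to~$F$ (hence to~$Y$). At the same time $X$ should be kept non-Martin-L\"of random, which is the easier constraint---one can deliberately make $X$ fail a fixed universal Martin-L\"of test while performing the martingale diagonalization, for instance by steering~$X$ into the test's components at controlled stages. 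The resulting~$X$ is computably random relative to almost every~$Z$, which is exactly a.e.\ computable randomness, and is not Martin-L\"of random, as desired.

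\emph{Main obstacle.} The hard part will be the forward direction, specifically translating the measure-theoretic ``positive measure of oracles'' hypothesis in the definition of a.e.\ computable randomness into the ``almost every~$Z$'' quantifier in the definition of a.e.\ domination, and ensuring the constructed functional/martingale is genuinely a partial computable oracle martingale that is total (and succeeds) on a positive-measure set of oracles whenever domination fails. The subtlety is that a.e.\ domination quantifies over all functionals $\Gamma$ simultaneously, so I must ensure the reduction from failure-of-domination to success-of-a-martingale is uniform enough that a single positive-measure failure set for~$\Gamma$ produces a positive-measure success set for a single probabilistic martingale, thereby contradicting a.e.\ computable randomness. Getting this uniformity right---and correctly handling the totality requirement in the a.e.\ computable randomness definition, as opposed to the weaker a.e.\ partial computable randomness---is where the argument must be most careful.
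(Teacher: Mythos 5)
Your forward direction is, in outline, the paper's own argument: the paper proves the contrapositive, takes the settling-time function $t^X(n) = \min \{ s \mid X \in \mathcal{U}_n[s]\}$ of a test capturing $X$ (this is your function ``read off from $X$''), and notes that if a functional $\Gamma$ satisfies $\Gamma^Z(n) > t^X(n)$ infinitely often for a positive measure of $Z$ with $\Gamma^Z$ total, then the clopen sets $\mathcal{U}_n[\Gamma^Z(n)]$ form a $Z$-computable test that $X$ fails infinitely often, so $X$ is not $Z$-Schnorr random (hence not $Z$-computably random) for a positive measure of $Z$'s. Your sketch leaves these realizations implicit but has the right shape; note that the paper never needs to build an explicit oracle martingale here, since the relativized clopen-test characterization of Schnorr randomness does that work.

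The converse direction is where your proposal has genuine gaps---and it, not the forward direction, is where the real work lies. First, ``a diagonalization against all such martingales becomes effective relative to $F$'' does not make sense as stated: there are continuum many oracles $Z$, hence continuum many martingales $d_i^Z$, and no single sequence can keep all of their capitals bounded (different $Z$'s bet in opposite directions). The paper's key move is an averaging argument: use $F$ as a time bound to totalize each $d_i^Z$ into $d_i^{Z,F}$, form $\hat{d}^Z = \sum_i 2^{-i} d_i^{Z,F}$, and diagonalize against the \emph{single} martingale $D(\sigma) = \int_Z \hat{d}^Z(\sigma)$, which is exactly $F$-computable because the time bound (plus a delay imposed on $d_i$) makes the integral a finite sum; Fatou's lemma then converts failure of $D$ on $X$ into failure of $\hat{d}^Z$ on $X$ for almost every $Z$, and a.e.\ domination converts failure of $d_i^{Z,F}$ into failure of $d_i^Z$ for almost every $Z$. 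This averaging-plus-Fatou step is absent from your plan, and without it the plan does not go through. Second, your method for making $X$ non-Martin-L\"of random---steering $X$ into components of a universal test during the diagonalization---is not ``the easier constraint'': entering a prescribed test component may force following strings along which $D$ multiplies its capital without bound, so it conflicts with keeping the capital below a threshold (the paper's remark about encoding information into computably random sequences flags exactly this kind of interleaving as substantially more technical). The paper avoids the issue entirely via Kolmogorov complexity: Lemma~\ref{lem:simple-aed} first replaces $F$ by an a.e.\ dominating $F' \leq_T Y$ with $K(F'(n)) = O(\log n)$; the diagonalization against the resulting $D$ then satisfies $K(X \restriction n) = O(\log n)$, which already rules out Martin-L\"of randomness (indeed even partial computable randomness, by Merkle's result). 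Without this lemma or a workable substitute, your construction establishes neither the non-Martin-L\"of-randomness of $X$ nor, because of the missing averaging step, its a.e.\ computable randomness.
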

	
	\begin{remark}
		Nies et al.'s theorem actually states a little more than what we wrote above, namely that the sequence~$X$ in the second part of the theorem can be chosen to be Turing equivalent to~$Y$. The analogue theorem is also true for a.e.\ computable randomness and a.e.\ domination but the proof becomes substantially more technical (we would need to introduce techniques to encode information into a computably random sequence) for only a small gain. 
	\end{remark}
	
	%\begin{theorem}\label{thm:no-aed-no-derandomization}
	%Let $\mathbf{a}$ be a Turing degree. If $\mathbf{a}$ is not almost everywhere dominating, then for every $X \in \mathbf{a}$, we have 
	%\[
	%X \in \aeCR\Leftrightarrow X \in \MLR
	%\]
	%\end{theorem}

	\begin{proof}
		Let us prove the first part of the theorem by its contrapositive. Let $X \in \sequences$ whose degree is not almost everywhere dominating. Suppose also $X$ is not Martin-L\"of random, i.e., $X \in \bigcap_n \mathcal{U}_n$ for $(\mathcal{U}_n)_{n \in \N}$ a sequence of uniformly effectively open sets with $\mu(\mathcal{U}_n) \leq 2^{-n}$. Consider the function $t^X$ defined by $t^X(n):= \min \{ s \mid  X \in \mathcal{U}_n[s] \}$. Since $X$ does not have a.e. dominating degree, there must exist a functional~$\Gamma$ such that 
		\[
		\mu \{Z \mid \Gamma^Z \text{is total and~} \exists^\infty n\  \Gamma^Z(n) > t^X(n)\} > 0 
		\]
		When $\Gamma^Z$ is total and $\Gamma^Z(n) > t^X(n)$ for infinitely many~$n$, we have $X \in \mathcal{U}_n[\Gamma^Z(n)]$ for infinitely many~$n$. Note that in that case $\mathcal{U}_n[\Gamma^Z(n)]$ is a clopen set which $Z$-uniformly computable in~$Z$. It is well-known that this type of test characterizes Schnorr randomness (a notion we will no discuss here but suffices to say that Schnorr randomness is weaker than computable randomness): a sequence $X$ is Schnorr random if and only if for every computable sequence of clopen sets $\mathcal{D}_n$ such that $\mu(\mathcal{D}_n) \leq 2^{-n}$, $X$ belongs to only finitely $\mathcal{D}_n$ (see for example~\cite[Lemma 1.5.9]{Bienvenu2008-PhD}). Relativized to~$Z$, this fact shows that $X$ is not $Z$-Schnorr random for a positive measure of~$Z$'s, thus not $Z$-computably random for a positive measure of~$Z$'s. \\
		
		The strategy to prove the second part of the theorem is to take the function~$F$ computed by~$Y$ and use it as a time bound on oracle martingales in order to `totalize'  them, which then allows us to use the averaging argument presented on page~\pageref{bussminnes}. In order for this to work, we must first prove that~$F$ can be assumed to be `simple' (in terms of Kolmogorov complexity). 
		
		\begin{lemma}\label{lem:simple-aed}
			If $Y$ has a.e.\ dominating Turing degree, it computes an a.e.\ dominating function $F$ such that $K(F(n))= O(\log n)$. 
		\end{lemma}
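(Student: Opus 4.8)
The plan is to start from an arbitrary a.e.\ dominating function $G \leq_T Y$, which I may take to be nondecreasing (replace $G$ by its running maximum), and then to organize $\N$ into the dyadic blocks $[2^k, 2^{k+1})$ and place a single value on each block, aiming for the block-$k$ value to be namable in $O(k)$ bits. The reason for the blocking is exactly the complexity budget: since a block of index $k$ consists of inputs $n$ with $\lfloor \log_2 n \rfloor = k$, any value describable by $O(k)$ bits automatically satisfies $K(F(n)) = O(\log n)$ on that block. The natural first candidate is $F_0(n) = G(\lfloor \log_2 n\rfloor)$, and I would check it is still a.e.\ dominating by the following reduction: given any functional $\Gamma$, the map $\Gamma'^Z(k) = \max\{\Gamma^Z(m) : m < 2^{k+1}\}$ is again a Turing functional because the bound $2^{k+1}$ is \emph{computable} (no appeal to $Y$ is needed, which is the crucial point), it is total exactly when $\Gamma^Z$ is, and a.e.\ domination of $G$ gives $\Gamma'^Z(k) \leq G(k)$ for a.e.\ $k$ and a.e.\ $Z$; hence for $2^k \leq n < 2^{k+1}$ one gets $\Gamma^Z(n) \leq \Gamma'^Z(k) \leq G(k) = F_0(n)$ for almost all $n$.

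This handles domination but not complexity, since the value $G(k)$ may be an incompressibly large integer. So the second part of the plan is to replace the block value by a \emph{low-complexity} integer of at least the magnitude needed to dominate. Rather than rounding $G(k)$ onto a fixed fast-growing computable scale (which, as explained below, cannot work uniformly), I would select the block values by a measure argument: let $(\Gamma_e)$ enumerate all functionals and define the block-$k$ value $v_k$ to be the least integer $m$ with $K(m) \leq c\,k$ for which $\mu\{Z : \max_{e \leq k}\ \max_{2^k \leq n < 2^{k+1}} \Gamma_e^Z(n) \geq m\} \leq 2^{-k}$, setting $F(n) = v_k$ on block $k$. Since $\sum_k 2^{-k} < \infty$, Borel--Cantelli guarantees that for a.e.\ $Z$ and all large $k$ no functional $\Gamma_e$ with $e \leq k$ exceeds $v_k$ on block $k$, which yields a.e.\ domination, while $K(F(n)) = K(v_k) \leq c\,k = O(\log n)$ holds by construction.

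The step I expect to be the main obstacle is verifying that such a low-complexity witness $v_k$ actually exists, which is where the genuine tension of the lemma lives. Domination forces the values to out-grow every $\Gamma^Z$, hence every computable function, so they are astronomically large; low complexity forces them below the ceiling $\max\{m : K(m) \leq c\,k\}$ of integers describable by $O(k)$ bits. No \emph{fixed} computable scale $c_0 < c_1 < \cdots$ can bridge this, since $c_{\mathrm{poly}(k)} \geq G(k)$ would make $n \mapsto c_{\mathrm{poly}(\log n)}$ a computable function dominating $G$, which is impossible. The heart of the proof is therefore to show that the relevant measure-$2^{-k}$ quantile of $\max_{e \leq k}\max_{n} \Gamma_e^Z(n)$ lies below this complexity ceiling and can be produced $Y$-computably; here I would use the a.e.\ domination of $G$ to bound the quantile by $G$ on the block for all large $k$, and then argue that rounding this $Y$-computable bound onto the ceiling (i.e.\ replacing it by a nearby integer of complexity $O(k)$ that still exceeds it) can be carried out using only $G$. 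Making this rounding simultaneously magnitude-preserving, complexity-bounded, and $Y$-computable is the delicate point on which the whole argument turns.
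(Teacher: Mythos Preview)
Your overall architecture---replace the original a.e.\ dominating function by measure-theoretic quantiles and use Borel--Cantelli to control the slack---is exactly the paper's. The gap is in the step you yourself flag as ``the delicate point'': your definition of $v_k$ as ``the least $m$ with $K(m)\leq ck$ such that \ldots'' is not $Y$-computable, because $K$ is not computable (only upper semicomputable), and there is no a~priori reason $Y$ can decide membership in $\{m:K(m)\leq ck\}$. Enumerating this set and waiting for a member above the quantile does not obviously terminate either, since the quantile is only bounded by a $Y$-computable function, and nothing prevents $Y$ from computing functions that outgrow the largest integer of complexity $ck$. So as written you have neither existence of $v_k$ nor its $Y$-computability.

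The paper's resolution is to avoid any search over low-complexity integers altogether. Working with a single universal functional $\Psi$ (rather than the family $(\Gamma_e)_{e\leq k}$), set $\mathcal{U}_{n,k}=\{Z: f^Z(n)<k\}$ for a suitable time-and-value bound $f^Z$; these are clopen and their measures are exactly computable rationals. Given the original $F$, let $a_n\in[0,n^2]$ be the largest integer with $\mu(\mathcal{U}_{n,F(n)})\geq a_n/n^2$, and define $F'(n)$ to be the least $k$ with $\mu(\mathcal{U}_{n,k})\geq a_n/n^2$. The point is that $F'(n)$ is an \emph{unrelativized computable} function of the pair $(n,a_n)$, so $K(F'(n))\leq K(a_n)+O(1)=O(\log n)$ automatically, with no reference to $K$ in the definition of $F'$. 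Since $\mu(\mathcal{U}_{n,F(n)}\setminus\mathcal{U}_{n,F'(n)})\leq 1/n^2$, Borel--Cantelli gives that $F'$ is still a.e.\ dominating, and $F'\leq_T F\leq_T Y$ because $a_n$ is $F$-computable. In short: do not try to \emph{round onto} low-complexity integers; instead, make the new value a computable function of a \emph{small parameter} (the quantized measure), so low complexity comes for free. Your dyadic blocking is then unnecessary.
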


		\begin{proof}%[of Lemma~\ref{lem:simple-aed}]
			Let $(\Phi_i)_{i \in \N}$ be an enumeration of all Turing functionals and consider the universal functional~$\Psi$ where $\Psi^{0^i1A}=\Phi^A_i$. It is easy to see that a function $F$ is almost everywhere dominating if for almost all~$Z$, either $\Psi^Z$ is not total or $\Phi^Z(n) \leq F(n)$ for almost every~$n$. For each~$Z$, let $t^Z(n)$ be the minimum~$t$, if it exists, such that $\Phi^Z(k)$ converges in time $\leq t$ for all~$k \leq n$ and let $f^Z(n) = t^Z(n) + \max_{k \leq n} \Phi^Z(k)$. 
			
			Let $Y$ be of a.e.\ dominating degree and $F \leq_T Y$ an almost everywhere dominating function. 
			
			For each~$n$, let
			\[
			\mathcal{U}_n = \{Z \mid f^Z(n) \downarrow< \infty\}
			\]
			which is $\Sigma^0_1$ uniformly in~$n$. We can write
			\[
			\mathcal{U}_n = \bigcup_k \mathcal{U}_{n,k}
			\]
			where
			\[
			\mathcal{U}_{n,k} = \{Z \mid f^Z(n) \downarrow<k\}
			\]
			and note that $\mathcal{U}_{n,k}$ is a clopen set, computable uniformly in~$n,k$. 
			
			Since~$F$ is almost everywhere dominating, we have that for almost all~$Z$ and almost all~$n$, either $f^Z(n)$ is undefined or $f^Z(n) \leq F(n)$. Said otherwise, the set
			\[
			\mathcal{N}_0 = \limsup (\mathcal{U}_n \setminus \mathcal{U}_{n,F(n)}) 
			\]
			is a nullset. \\
			
			Now, for all~$n$, let $a_n \in [0,n^2]$ be the largest integer that $\mu(\mathcal{U}_{n,F(n)}) \geq a_n /n^2$ and $F'(n)$ be the smallest~$k$ such that  $\mu(\mathcal{U}_{n,k}) \geq a_n /n^2$. We see that $F'(n)$ is computable from~$F$ and furthermore, 
			\[
			K(F'(n)) \leq K(a_n) +O(1) \leq 2 \log(n^2) +O(1) \leq 4 \log n +O(1)
			\]
			By definition, we have $\mu(\mathcal{U}_{n,F(n)}) \setminus \mathcal{U}_{n,F'(n)}) \leq 1/n^2$. By the Borel-Cantelli lemma, 
			\[
			\mathcal{N}_1 = \limsup (\mathcal{U}_{n,F(n)} \setminus \mathcal{U}_{n,F'(n)}) 
			\]
			is a nullset. Thus, $\mathcal{N}_0 \cup \mathcal{N}_1$ is a nullset, which means that 
			\[
			\limsup (\mathcal{U}_n \setminus \mathcal{U}_{n,F'(n)}) 
			\]
			is also a nullset, which in turn means that for almost all~$Z$, for almost all~$n$, if $f^Z(n)$ is defined, then $f^Z(n) \leq F'(n)$. By definition of $f$, a fortiori, for almost all~$Z$, if $\Phi^Z$ is total, then $\Phi^Z(n) \leq F'(n)$ for almost all~$n$. Thus the function $F$' 
			\begin{itemize}
				\item is almost everywhere dominating
				\item is computable in~$F$, hence computable in~$Y$
				\item satisfies $K(F'(n)) = O(\log n)$
			\end{itemize} 
			
			which finishes the proof of the lemma.
		\end{proof}
		
		As alluded to above, the function~$F$ is going to be used as a time bound. To see what we mean by this, consider a total (not necessarily computable) non-decreasing function $\psi: \N \rightarrow \N$. Let $d$ be a (partial) exactly computable martingale. The time-bounded version of~$d$ with time bound~$\psi$ is the martingale $d^\psi$ which mimics~$d$ but only allows it a time $\psi(n)$ to compute its bets on strings of length~$n$. If $d$ has not made a decision by this stage (either because it is in fact undefined, or because the time of computation is greater than $\psi(n)$)), the casino exclaims \textit{``End of bets, nothing goes on the table!"} and the martingale is assumed to have placed an empty bet. Formally, $d^\psi(\emptystr)=d(\emptystr)$ and for any string~$\sigma$ and $b \in \{0,1\}$:
		
		\[
		d^\psi(\sigma b)= \left\{ \begin{array}{ll} d^\psi(\sigma) \cdot d(\sigma b)/d(\sigma) & \text{if both}~ d(\sigma0)[\psi(n+1)] \downarrow ~\text{and}~ d(\sigma1)[\psi(n+1)] \downarrow \\ d^\psi(\sigma)~\text{otherwise} \end{array} \right. 
		\] 
		
		By definition $d^\psi$ is always total, and when $d$ is total, if the bound $\psi$ dominates the convergence time of $d$ (that is, for almost all~$\sigma$, $d(\sigma)[\psi(|\sigma|)] \downarrow$), then $d^\psi$ and $d$ are within a multiplicative constant of one another, which in particular implies that $d^\psi$ succeeds on the same sequences as~$d$.  
		
		Now, let $(d_i)$ be the effective enumeration of all exactly computable martingales with oracle. Without loss of generality, assume that $d_i$ has a delay~$i$ imposed on it. Let $F$ be the a.e\ dominating function as above. Let $\hat{d}$ be the oracle martingale defined by
		\[
		\hat{d}^Z(\sigma) = \sum_i 2^{-i} d^{Z, F}_i(\sigma)
		\]
		($d^{Z, F}_i$ is the time-bounded version of~$d^Z_i$ with time bound~$F$). 
		
		It is a total martingale for every~$Z$ as all $d^{Z, F}_i$ are total martingales. Thus, its average~$D$ defined by
		\[
		D(\sigma) = \int_Z \hat{d}^Z(\sigma)
		\]
		is also a martingale. 
		
		Moreover, $D$ is $F$- (exactly)computable. Indeed, because of the time bound~$F$, the value of $d^{Z, F}_i(\sigma)$ only depends of the first $F(|\sigma|)$ bits of~$Z$, and because of the delay on~the $d_i$, only the martingales $(d_i)_{i \leq |\sigma|}$ matter in the computation of $D(\sigma)$. Thus the integral $\int_Z \hat{d}^Z(\sigma)$ is in fact a finite sum, can be computed from~$F(|\sigma|)$, hence the $F$-computability of~$D$. Even more precisely, the set of values $\{D(\sigma) \mid |\sigma| \leq n\}$ is computable from $F(n)$, and thus the Kolmogorov complexity of this set is at most $K(F(n)) +O(1) = O(\log n)$.   
		
		Let then~$X$ be the sequence which diagonalizes against~$D$ (the reader not familiar with this concept will find all the necessary definitions in the next section). Computing the first~$n$ bits of $X$ only requires to know the set of values $\{D(\sigma) \mid |\sigma| \leq n\}$. Thus, we have established:
		\begin{itemize}
			\item $X \leq_T F$
			\item $K(X \restriction n) \leq K(F(n)) +O(1) = O(\log n)$. 
		\end{itemize}
		
		Since $D$ does not succeed on~$X$, by the exact same calculation as page~\pageref{bussminnes}, for almost all~$Z$, $\hat{d}^Z$ does not succeed on~$X$, and thus $d^{Z,F}_i$ does not succeed on~$X$ for any~$i$.
		
		But we also know, since~$F$ is a.e.\ dominating, for all $i$, for almost every~$Z$, either $d^Z_i$ is partial, or $d^Z_i$ is total and its computation time is dominated by~$F$, hence $d^Z$ is within a multiplicative constant of $d^{Z,F}$. 
		
		Putting the two together, this entails that for almost all~$i$ and almost all~$Z$, either $d^Z_i$ is partial or it is total and does not succeed on~$X$. In other words, $X$ is a.e.\ computably random. 
		
		$X$ has therefore all the desired properties:
		\begin{itemize}
			\item It is a.e.\ computably random, 
			\item It is computable in~$F$ and thus computable in~$Y$,
			\item $K(X \restriction n) = O(\log n)$, ensuring that~$X$ is not only not Martin-L\"of random, but not even partial computably random using a result of Merkle~\cite{merkle2008complexity} (no partial computably random sequence can be of logarithmic complexity). 
		\end{itemize}
	\end{proof}
	
	An important result of Binns et al.~\cite{BinnsKLS2006} is that a.e.\ domination is strictly stronger than highness. Thus this gives us the promised weaker version of Theorem~\ref{thm:main-result}: there exists a sequence~$X$ which is computably random but not a.e.\ computably random. Indeed take a high Turing degree~$\mathbf{a}$ which is not a.e.\ dominating. By Theorem~\ref{thm:nst}, there is an~$X$ in~$\mathbf{a}$ which is computably random but not Martin-L\"of random hence not a.e.\ computably random by Theorem~\ref{thm:degree-sep}.

	\section{The main construction}
	
	We now turn to the full proof of Theorem~\ref{thm:main-result}. We first recall the standard method to build a partial computably random sequence (see for example~\cite{Nies2009}). Next, we combine this construction with the so-called `fireworks' technique which can be viewed as a probabilistic forcing to see how to defeat, with probabilistic martingales, sequences that have been built using this construction.

	%\subsection{Building a partial computably random sequence}
	
	%As a warm-up, we recall the standard a way to construct a computably random sequence which is not Martin-L\"of computably random (see for example~\cite{Nies2009}). We will do so by a diagonalization  argument. 
	
	\subsection{Defeating finitely many martingales}
	
	Let us begin by explaining how to construct a partial computably random sequence. Let us first consider the simple case where we are trying to defeat a single martingale~$d$, which we assume for the moment to be total computable, by making sure its capital does not go above a certain threshold. Up to multiplying~$d$ by a small rational, we may assume that that $d(\emptystr)<1$. By induction, suppose we have already built $X \restriction n$ in a way that $d(X \restriction i)<1$ for all $i \leq n$. By the fairness condition, either $d((X \restriction n)\concat 0) < 1$ or  $d((X \restriction n) \concat 1) < 1$. If the former is true, we set $X \restriction (n+1) = (X \restriction n)\concat 0$, otherwise we set $X \restriction (n+1) = (X \restriction n)\concat 1$.  Continuing in this fashion we ensure that the martingale $d$ does not succeed against~$X$ as its never reaches 2. Observe that when the martingale~$d$ is exactly computable, the sequence~$X$ is computable (uniformly in a code for~$d$). 
	
	Suppose now that we have a finite family of total martingales $d_1, \ldots d_n$. If we want to diagonalize against all of them at the same time, one can simply find positive rationals $q_1, \ldots, q_n$ such that $\sum_{i=1}^n q_i \cdot d_i(\emptystr) < 1$ and proceed as before against the martingale $\sum_{i=1}^n q_i \cdot d_i$. Again, the sequence~$X$ obtained by diagonalization against this finite family of martingales is computable uniformly in a code for the family of $d_i$'s. 
	But suppose now that some of the martingales in this family are partial instead of total. This does not cause much difficulty: having already built $X \restriction n$, consider only the sub-family $F$ of indices of martingales that are still defined on $(X \restriction n)\concat 0$ and $(X \restriction n)\concat 1$. The other martingales are undefined and thus will not succeed by fiat on the sequence~$X$. Now, if $\sum_{i \in F} q_i \cdot d_i((X \restriction n)\concat 0) <1$, set $X \restriction (n+1) = (X \restriction n) \concat 0$, otherwise set $X \restriction (n+1) = (X \restriction n)\concat 1$. Once again the sequence~$X$ defeats all of the $d_i$'s, some of them because they become undefined at some stage, some of them because their capital never exceeds $1/q_i$. Moreover, $X$ is still a computable sequence. It is not however computable uniformly in a code for the family of $d_i$'s because one needs to specify which martingales become undefined in the construction and when (this is a finite amount of information but it cannot be uniformly computed) but this is not an obstacle for our purposes. 
	
	To summarize these preliminary considerations, we can make the following definition. 
	
	\begin{definition}
		Let $(d_1,q_1), \ldots (d_n,q_n)$ be a finite family where each $d_i$ is a (code for) a partial computable martingale and $q_i$ a positive rational. Let $\sigma \in \words$ such that, calling $F$ the family of indices~$i$ such that $d_i(\sigma)$ converges, we have $\sum_{i \in F} q_i \cdot d_i(\sigma) <1$. Consider the computable sequence~$X$ defined inductively by $X \restriction |\sigma| = \sigma$ and if $X \restriction n$ is already built, letting $F_n$ be the family of indices such that $d_i((X \restriction n)\concat 0)$ converges, then $X \restriction (n+1) = (X \restriction n) \concat 0$ if $\sum_{i \in F_n} q_i \cdot d_i(X \restriction n)\concat 0) <1$ and $X \restriction (n+1) = (X \restriction n) \concat 1$ otherwise. This sequence is called the \emph{diagonalization against  $(d_1,q_1), \ldots, (d_n,q_n)$ above $\sigma$}. 
	\end{definition}
	
	\subsection{Defeating all partial computable martingales}
	
	When we have a countable family of martingales to diagonalize against, the standard way to proceed is to introduce them one by one during the game so that at any step we only have to diagonalize against a finite family as above. The delays between the introduction of martingales is flexible and therefore will be a parameter of the construction. \\
	
	\noindent \textbf{The diagonalizing sequence $\Delta((t_e)_{e \in N})$}. \\
	
	Let $(d_i)_{i \in \N}$ be a standard enumeration of partial computable rational valued martingales. Let $(t_e)_{e \in \N}$ be a family of integers. The sequence $\Delta((t_e)_{e \in N})$ is constructed by finite extension as follows. Start with the empty string $\sigma_0 = \emptystr$ and recursively do the following. Having built $\sigma_n$, let $q_{n+1}$ be a rational such that $\sum_{i \in F} q_i \cdot d_i(\sigma_n) <1$ where $F$ is the set of indices $i \in [1,n+1]$ such that $d_i(\sigma_n)$ converges. Let $Z$ be the diagonalization against $(d_1,q_1), \ldots, (d_{n+1},q_{n+1})$ above $\sigma_n$. The sequence $Z$ is an extension of $\sigma$ and is computable (see above), so let $e$ be a code for it (say the smallest one). Define $\sigma_{n+1}=Z \restriction ({|\sigma_n|+t_e})$. Finally, set
	\[
	\Delta((t_e)_{e \in N}) = \bigcup_n \sigma_n
	\]
	It is easy to check that $\Delta((t_e)_{e \in N})$ defeats all partial computable martingales. Moreover, the construction ensures the following important fact, which will be key for the rest of our proof:

	\textit{Fact 1:} For infinitely many~$e$ (namely, those codes that show up in the construction), the sequence $\Delta((t_e)_{e \in N})$ coincides with the computable sequence $Z$ of index~$e$ on a prefix of length~$\geq t_e$.

	\subsection{Fireworks}
	
	Let $(\P, \leq)$ be a computable order, that is, each element $p \in \P$ can be encoded by an integer and for a given pair $(n,m)$ of integers, it is decidable whether $n$ and $m$ are indeed codes for two elements of $p$ and $q$ in $\P$ and whether $p \leq q$. We say that a sequence $(p_i)_{i \in \N}$ of elements of $\P$ is \emph{$\P$-generic} if $p_0 \geq p_1 \geq p_2 \geq \ldots$ and for every c.e.\ subset $W$ of $\P$:
	\begin{itemize}
		\item either there exists an~$i$ such that $p_i \in W$
		\item or, there exists a~$j$ such that for any $q \leq p_j$, $q \notin W$
	\end{itemize}
	
	In particular, if $W$ is dense (that is, for every $p \in \P$ there exists $q \leq p$ such that $q \in W$), then for every generic sequence $(p_i)_{i \in \N}$ there must be some $i$ such that $p_i \in W$, in which case we say that $\P$ \textit{meets} $W$. 
	
	For most computable orders of interest, there cannot exist a computable generic sequence. However, there is a way to probabilistically obtain one, using the so-called fireworks technique. It was first invented by Kurtz~\cite{kurtz1982randomness} who showed that one can probabilistically obtain a generic sequence when $\P$ is the set of strings and $\sigma \leq \tau$ when $\tau$ is a prefix of $\sigma$. Rumyantsev and Shen~\cite{rumyantsev2014probabilistic} simplified Kurtz's presentation of this technique, which in turn allowed Bienvenu and Patey to make the following generalization to any computable order. 
	
	\begin{theorem}[Fireworks master theorem~\cite{bienvenu2017diagonally}]\label{thm:fireworks}
		For any computable order $\P$, there exists a Turing functional~$\Phi$ with range $\P$ such that for a set of $Z$'s of positive measure, we have that $\Phi^Z(i)$ is defined for all~$i$ and the sequence $(\Phi^Z(i))_{i \in \N}$ is generic. 
	\end{theorem}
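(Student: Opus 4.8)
The plan is to reduce the construction of a probabilistically obtained generic sequence to a single-requirement module, one for each c.e.\ subset of $\P$, and then compose these modules sequentially, all driven by the same random oracle $Z$. Fix a uniform enumeration $(W_e)_{e\in\N}$ of all c.e.\ subsets of $\P$; satisfying genericity amounts to meeting or cone-avoiding each $W_e$, i.e.\ for the descending sequence we build we must produce, for every $e$, either some term in $W_e$ or some term $p_j$ with $q\notin W_e$ for all $q\le p_j$. I would reserve for each $e$ a disjoint block of coordinates of $Z$, to be used as the private randomness of module $e$, so that the modules make essentially independent random choices.

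The heart of the argument is a single-requirement fireworks lemma, essentially Rumyantsev--Shen's core combinatorial device. I would prove: given a starting condition $p\in\P$, an enumeration of a c.e.\ set $W$, and a rational $\varepsilon>0$, there is a probabilistic procedure using finitely many random bits that \emph{always} halts and returns a condition $p'\le p$ such that, with probability at least $1-\varepsilon$, $p'$ decides $W$ below $p$ --- meaning either $p'\in W$, or no $q\le p'$ lies in $W$. The key point, and the reason randomness is indispensable, is the tension between halting and correctness: the avoidance alternative $\forall q\le p'\ q\notin W$ is only a $\Pi^0_1$ fact, so no deterministic procedure can both halt and safely commit to it; indeed a deterministic halting decider, composed over all $e$, would compute a generic sequence, which does not exist for the computable orders of interest. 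Instead the procedure draws a random threshold $N$ from a spread-out distribution and refines its guess each time a witness $q\le(\text{current guess})$ is enumerated into $W$, committing to \emph{meet} only on the $N$-th such witness and otherwise eventually committing to \emph{avoid}. The failure event --- committing to avoidance at a guess below which a witness later appears --- forces $N$ to take one particular value (determined by how many witnesses $W$ ever reveals), so the failure probability is bounded by the largest atom $\max_k \Pr[N=k]$; taking $N$ uniform on a large enough finite range makes this at most $\varepsilon$. This probabilistic estimate is the step I expect to be the main obstacle, since it is where the precise distribution of $N$ and a telescoping count of the witnesses must be matched against $\varepsilon$.

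With the module in hand, the composition is routine. Set $\varepsilon_e=2^{-e-2}$ and run the modules sequentially: module $0$ starts from the top condition and returns $p^{(0)}$, and module $e+1$ starts from $p^{(e)}$, using its private block of $Z$, and returns some $p^{(e+1)}\le p^{(e)}$. Since every module halts for every outcome of its coins, the functional $\Phi$ given by $\Phi^Z(e)=p^{(e)}$ is total for every $Z$, and its outputs form a descending sequence in $\P$, so $\Phi$ takes values in $\P$. If module $e$ succeeds then $p^{(e)}$ decides $W_e$, which is exactly one of the two alternatives in the definition of genericity (with $p_i=p^{(e)}$ in the meeting case and $p_j=p^{(e)}$ in the avoiding case). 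Finally, whether module $e$ errs depends only on $p^{(e)}$ and $W_e$ --- namely whether it commits to avoidance at a $p^{(e)}$ with some $q\le p^{(e)}$ in $W_e$ --- so by the lemma this event has probability at most $\varepsilon_e$ even after conditioning on the earlier modules. A union bound then gives $\mu\{Z:\ \text{some module errs}\}\le\sum_e 2^{-e-2}=1/2$, so for a set of $Z$ of measure at least $1/2$ every module succeeds and $(\Phi^Z(e))_{e\in\N}$ is generic, as required.
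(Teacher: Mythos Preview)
The paper does not prove this theorem: it is quoted from Bienvenu--Patey \cite{bienvenu2017diagonally} and used as a black box, so there is no proof here to compare yours against. I will therefore just comment on your argument on its own terms.

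Your high-level plan --- one random threshold $N_e$ per requirement, the observation that failure pins $N_e$ to a single value, and a union bound --- is exactly the fireworks idea, and the probabilistic estimate you sketch is the right one. The gap is the claim that the single-requirement procedure \emph{always halts}. Your module waits for successive witnesses $q\le(\text{current guess})$ to be enumerated into $W$ and outputs the $N$-th one; if the maximal descending chain of such witnesses below $p$ has length $K<N$, the search for the $(K{+}1)$-st witness simply diverges. You write that the module will ``otherwise eventually commit to avoid,'' but you give no mechanism for this --- and, as you yourself stress, certifying avoidance is a $\Pi^0_1$ fact, so no finite stage can soundly trigger that commitment. Randomness helps with \emph{correctness} here, not with \emph{halting}. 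Hence the assertion that $\Phi^Z$ is total for every $Z$, on which your sequential composition of modules rests, is not established. (Forcing halting by a time-out does not rescue the ``exactly one bad $N$'' count: for a slowly enumerated $W$, many values of $N$ will time out at a condition that still has an extension in $W$.)

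In the actual construction the modules are not run to completion in sequence; they are \emph{interleaved}. While module $e$ waits for its next witness, lower-priority modules are allowed to act on the current condition, and one takes $\Phi^Z(i)$ to be the current condition at stage $i$, so totality is automatic even when some module waits forever. The ``one bad value of $N_e$'' estimate survives after conditioning on $N_0,\dots,N_{e-1}$, but making this precise --- in particular, controlling how lower-priority activity interacts with module $e$'s search --- is exactly the bookkeeping that the fireworks argument is about, and your write-up elides it.
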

	
	For our proof of Theorem~\ref{thm:main-result}, we are going to use the order $\P$ whose elements are finite approximations of martingales with positive rational values. Specifically, a member of $\P$ is a total function $f$ whose domain is $\{0,1\}^{\leq n}$ for some~$n$ -- which we call \emph{length of~$f$} and denote by $lh(f)$ -- whose range is $\Q^{>0}$, such that  $f(\emptystr)=1$ and $f(\sigma)=(f(\sigma0)+f(\sigma1))/2$ for all $\sigma$ of length $<lh(f)$. We say that $g \leq f$ if $g$ is an extension of $f$ (i.e., the domain of $f$ is contained in the domain of $g$ and the two coincide on the domain of~$f$). It is clear that $(\P,\leq)$ is a computable order. It is also clear that if $f_1 \geq f_2 \geq \ldots$ is a sequence of elements of $\P$ such that $lh(f_i)$ tends to $+\infty$, then $D=\bigcup f_i$ is a total rational valued martingale. This is in particular the case when $(f_i)_{i \in \N}$ is a $\P$-generic sequence, because for every $n$, the set of elements of $\P$ of length at least~$n$ is dense; in this case, we say that the martingale $D=\bigcup f_i$ is a \emph{$\P$-generic martingale}.

	\begin{lemma}\label{lem:delta-generic}
		Let $D$ be a $\P$-generic martingale. For every computable sequence~$Z$ and integer~$k$ there exists $s$ such that $D$ reaches capital at least $k$ while playing against the prefix of $Z$ of length~$s$ (that is, $D(Z \restriction l)>k$ for some $l<s$). 
	\end{lemma}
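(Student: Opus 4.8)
The plan is to exploit genericity by showing that ``reaching capital at least $k$ against $Z$'' is a dense, c.e.\ requirement, so that any $\P$-generic sequence must satisfy it. Fix a computable sequence $Z$ and an integer $k$, and let $(f_i)_{i \in \N}$ be the $\P$-generic sequence with $D = \bigcup_i f_i$. I would define
\[
W = \{ f \in \P : f(Z \restriction l) > k \text{ for some } l \leq lh(f) \}.
\]
Since $\P$ is a computable order, $Z$ is computable, and membership only involves evaluating the finite object $f$ along the computable path $Z$, the set $W$ is computable and in particular c.e.

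Next I would verify that $W$ is dense. Given an arbitrary $f \in \P$ of length $n = lh(f)$, write $c = f(Z \restriction n) \in \Q^{>0}$ for its current capital along $Z$. I extend $f$ to a longer approximation $g \leq f$ as follows: along the path of $Z$, at each node $Z \restriction (n+j)$ the gambler bets half of its current capital on the true next bit $Z(n+j)$, so that its capital along $Z$ is multiplied by $3/2$ at each step; off the path of $Z$, the gambler places no bet, i.e.\ $g(\tau\concat 0) = g(\tau\concat 1) = g(\tau)$. This keeps all values in $\Q^{>0}$ and respects the fairness condition, so $g \in \P$ and $g \leq f$. Choosing the number $m$ of additional levels large enough that $c \cdot (3/2)^m > k$, we obtain $g(Z \restriction (n+m)) > k$, whence $g \in W$. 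As $f$ was arbitrary, $W$ is dense.

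Finally I would invoke genericity. Because $W$ is a dense c.e.\ subset of $\P$ and $(f_i)$ is $\P$-generic, the sequence meets $W$: there is some $i$ with $f_i \in W$, i.e.\ $f_i(Z \restriction l) > k$ for some $l \leq lh(f_i)$. Since $D = \bigcup_j f_j$ extends $f_i$, we get $D(Z \restriction l) = f_i(Z \restriction l) > k$, and taking any $s > l$ completes the proof.

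I expect the only point requiring genuine care to be the density step, and specifically the observation that a $\P$-element must keep strictly positive rational values: one cannot stake the entire capital on the next bit, so a single step cannot quite double the capital. Betting a fixed fraction (here one half) circumvents this while still yielding unbounded multiplicative growth along the \emph{known} path $Z$, which is all that is needed. The mild mismatch between the strict inequality $l<s$ in the statement and the bound $l\le lh(f)$ used in $W$ is harmless, since one may always extend $g$ by one further level. Everything else is a direct application of the genericity of $(f_i)$ to the requirement encoded by $W$.
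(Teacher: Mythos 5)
Your proof is correct and takes essentially the same route as the paper's: the identical dense c.e.\ set $W$, density established by extending an arbitrary $f \in \P$ with an explicit betting strategy along $Z$, and then an appeal to genericity. If anything, your density step is more careful than the paper's own, which uses the full doubling strategy and hence assigns value $0$ off the path of $Z$ (strictly speaking violating the requirement that elements of $\P$ take values in $\Q^{>0}$), whereas your bet-half strategy with growth factor $3/2$ keeps all values positive --- precisely the subtlety you flagged.
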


	\begin{proof}
		Fix a computable~$Z$ and consider the set
		\[
		W = \{g \in \P \mid (\exists l)~g(Z \restriction l)>k \}
		\]
		We claim that~$W$ is a dense c.e.\ subset of $\P$. That it is c.e.\ is clear. Now, take any $f \in \P$. Let $n=lh(f)$. By definition of $\P$, $f(Z \restriction n)$ is positive, so we can pick an $m>n$ such that $2^{m-n} \cdot f(Z \restriction n) > k$. Let $g$ be the martingale of length~$m$ which  behaves like~$f$ up to length~$n$ and after that stage plays the doubling strategy on~$Z$ (and stops betting outside of~$Z$). Formally:
		\[
		g(\tau) = \left \{  
		\begin{array}{ll}
		f(\tau) & ~ \text{if $|\tau| \leq n$}  \\
		f(\tau \restriction n) & ~ \text{if $|\tau| \geq n$ and $\tau \restriction n \not= Z \restriction n$} \\ 
		0 & ~ \text{if $\tau \restriction n = Z \restriction n$ but $\tau$ is not a prefix of~$Z$} \\
		f(Z \restriction n) \cdot 2^{|\tau|-n} & ~\text{if $\tau$ is a prefix of~$Z$}
		\end{array}
		\right. 
		\]
		
		It is easy to check that $g$ is a finite approximation of martingale which extends~$f$ and by construction $g(Z \restriction m) = 2^{m-n}\cdot f(Z \restriction n) > k$. Thus~$W$ is indeed dense. 
		
	\end{proof}
	
	We can now finish the proof of our main result. 
	
	\begin{proof}[Proof of Theorem~\ref{thm:main-result}]
		By Theorem~\ref{thm:fireworks} applied to our partial order $(\P,\leq)$, there is a Turing functional~$\Phi$ and a set $\mathcal{G}$ of positive measure such that for every $Z \in \mathcal{G}$, $\Phi^Z(n)$ is a $\P$-generic sequence. Thus for $Z \in \mathcal{G}$, $D^Z= \bigcup_n \Phi^Z(n)$ is a $\P$-generic martingale. \\
		
		Let $Z$ be a computable sequence and $e$ be a code for $Z$. By Lemma~\ref{lem:delta-generic}, for every $X \in \mathcal{G}$, there exists some $l^X_e$ such that $D^X$ -- being a $\P$-generic martingale -- reaches capital at least $e$ at some point while playing against the prefix $Z \restriction l^X_e$. 
		
		Now, for each~$e$ which is the code of a computable sequence choose some $s_e$ large enough to have 
		\[
		\mu \{Z \in \mathcal{G} \mid  l^X_e \leq s_e\} \geq (1- 2^{-e-1})\mu(\mathcal{G})
		\]
		(and for $e$ which is not a code for a computable sequence, choose $s_e$ arbitrarily). 
		
		This guarantees that 
		\[
		\mu \{Z \in \mathcal{G}  \mid (\forall e ~\text{code for a computable seq.})\,\,  l^X_e \leq s_e\} \geq \mu(\mathcal{G})/2 > 0
		\]
		Let $\mathcal{H}$ be the set of the left-hand side of this inequality. 
		
		Let us consider the sequence $\Delta((s_e)_{e \in \N})$, which by construction is partial computably random. For every $X \in \mathcal{H}$, for every computable sequence~$Z$ of code $e$, the martingale~$D^X$ reaches capital at least~$e$ on~$Z \restriction s_e$. On the other hand, by Fact 1, we know that for infinitely many~$e$, the sequence $\Delta((s_e)_{e \in N})$ coincides with the computable sequence $Z$ of index~$e$ on a prefix of length~$\geq s_e$. Thus this guarantees that for $X \in \mathcal{H}$, $D^X$ reaches capital at least~$e$ while playing on $\Delta((s_e)_{e \in \N})$. Thus $\Delta((s_e)_{e \in \N})$ is partial computably random but not almost everywhere computably random since~$\mathcal{H}$ has positive measure. 
	\end{proof}
	% The proof of Theorem~\ref{thm:main-result} presented in the previous section is nonetheless more satisfying in many respects. It is more direct, constructs an~$X$ which can be taken to be non-partial computably random (we do not get this in the alternative, degree-theoretic proof) and with only small modifications could also make~$X$ not a.e.\ Church stochasticity\footnote{Church stochasticity is a weaker notion than computable randomness. One of its characterizations~\cite{Ambos-SpiesMWZ1996} is that $X$ is Church stochastic if no computable simple martingale succeeds on it, where a simple martingale is a martingale which at every step bets on the next value either nothing or a fixed fraction of its capital.}, which again we do not get in the degree-theoretic proof.\\
	
	% We end this paper with the main questions we left open. 

	% \begin{question}
	% We have proved in Theorem~\ref{thm:degree-sep} that there exists some $X$ which is a.e.\ computably random but not partial computably random. Is there a sequence $X$ which is partial computably random but not a.e.\ computably random? 
	% \end{question}

	% \begin{question}\label{question:pcr}
	% Is there sequence~$X$ which is partial computably random but a.e.\ partial computably random? 
	% \end{question}

	\newpage
	
	\bibliographystyle{plain}
	\bibliography{probabilistic_vs_deterministic_gamblers}

\end{document}